\newcounter{itemcounter}
\numberwithin{itemcounter}{section}
\newtheorem{thm}[itemcounter]{Theorem}
\newtheorem{lem}[itemcounter]{Lemma}
\newtheorem{defi}[itemcounter]{Definition}
\newtheorem{prop}[itemcounter]{Proposition}
\newtheorem{con}[itemcounter]{Conjecture}
\newtheorem*{thm*}{Theorem}
\newtheorem*{con*}{Conjecture}
\newtheorem*{cor*}{Corollary}
\newtheorem*{ack*}{Acknowledgements}
\newcommand{\Syl}{\mathop{\rm Syl}\nolimits}
\newcommand{\Irr}{\mathop{\rm Irr}\nolimits}
\newcommand{\IBr}{\mathop{\rm IBr}\nolimits}
\newcommand{\Aut}{\mathop{\rm Aut}\nolimits}
\newcommand{\Stab}{\mathop{\rm Stab}\nolimits}
\newcommand{\rk}{\mathop{\rm rk}\nolimits}
\newcommand{\mf}{\mathop{\rm mf}\nolimits}
\newcommand{\omf}{\mathop{\rm mf_\mathcal{O}}\nolimits}
\newcommand{\nth}{\mathop{\rm th}\nolimits}
\newcommand{\Id}{\mathop{\rm Id}\nolimits}
\newcommand{\cO} {\mathcal{O}}
\title{Arbitrarily large $\cO$-Morita Frobenius numbers \footnote{This research was supported by the EPSRC (grant no. EP/T004606/1).}}
\author{Michael Livesey\footnote{School of Mathematics, University of Manchester, Manchester, M13 9PL, United Kingdom. Email: michael.livesey@manchester.ac.uk}}
\date{}
\begin{document}

\maketitle

\begin{abstract}
We construct blocks of finite groups with arbitrarily large $\cO$-Morita Frobenius numbers. There are no known examples of two blocks defined over $\cO$, with isomorphic defect groups, that are not Morita equivalent but the corresponding blocks defined over $k$ are. Therefore, the above strongly suggests that Morita Frobenius numbers are also unbounded, which would answer a question of Benson and Kessar.
\end{abstract}

\section{Introduction}

Let $l$ be a prime, $(K,\cO,k)$ an $l$-modular system with $k$ algebraically closed, $H$ a finite group and $b$ a block of $\cO H$. In this setup we always assume $K$ contains a primitive $|H|^{\nth}$ root of unity. We define $\overline{\phantom{A}}:\cO\to k$ to be the natural quotient map which we extend to the corresponding ring homomorphism $\overline{\phantom{A}}:\cO H\to kH$. For $n\in\mathbb{N}$, we define $\overline{b}^{(l^n)}$ to be the block of $kH$ that is the image of $\overline{b}$ under the following ring automorphism
\begin{align}\label{algn:ring_auto}
\begin{split}
kH&\to kH\\
\sum_{h\in H}\alpha_hh&\mapsto\sum_{h\in H}(\alpha_h)^{l^n}h.
\end{split}
\end{align}
We can also define the corresponding permutation of the blocks of $\cO H$. In other words, $b^{(l^n)}$ is the unique block of $\cO H$ such that $\overline{b^{(l^n)}}=\overline{b}^{(l^n)}$. We now define the Morita Frobenius number of a block, first defined by Kessar~\cite{ke04}.

\begin{defi}
Let $H$ be a finite group and $b$ a block of $\cO H$. The \emph{Morita Frobenius number} of $\overline{b}$, denoted by $\mf(\overline{b})$, is the smallest $n\in\mathbb{N}$, such that $\overline{b}$ and $\overline{b}^{(l^n)}$ are Morita equivalent as $k$-algebras. Similarly, the \emph{$\cO$-Morita Frobenius number} of $b$, denoted by $\omf(b)$, is the smallest $n\in\mathbb{N}$, such that $b$ and $b^{(l^n)}$ are Morita equivalent as $\cO$-algebras.
\end{defi}

Since a Morita equivalence between two blocks defined over $\cO$ implies a Morita equivalence between the corresponding blocks defined over $k$, we always have $\mf(\overline{b})\leq \omf(b)$. Donovan's conjecture, which can be stated over $\cO$ or $k$, is as follows.

\begin{con}[Donovan] Let $L$ be a finite $l$-group. Then, amongst all finite groups $H$ and blocks $b$ (respectively $\overline{b}$) of $\cO H$ (respectively $kH$) with defect groups isomorphic to $L$, there are only finitely many Morita equivalence classes.
\end{con}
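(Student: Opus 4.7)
The plan is to follow the standard reduction strategy that has crystallised for Donovan's conjecture over the past two decades. Fix a finite $l$-group $L$. First I would reduce to quasi-simple groups using Clifford-theoretic tools (Fong-Reynolds, covering blocks, and the descent/crossed-product machinery developed in the Kessar-Linckelmann framework and exploited over $\cO$ by Eaton-Livesey and others). The target reduction statement is: if only finitely many Morita classes of blocks of quasi-simple groups occur with defect group of order bounded by some explicit function of $|L|$, then only finitely many occur in general.

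With the problem reduced to quasi-simple $H$, I would invoke the classification of finite simple groups. The alternating and sporadic families contribute only finitely many blocks with bounded defect, so these cases are direct. The main work is for groups of Lie type, where the strategy would combine Bonnaf\'e-Dat-Rouquier to reduce to quasi-isolated blocks with Brou\'e's abelian defect group conjecture (known in a significant number of cases) to replace a given block by its Brauer correspondent in the normaliser of the defect group, whose source algebra structure is substantially more constrained by local data.

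Running in parallel I would exploit the Alperin-Brou\'e-Puig local theory: the Morita equivalence class of $b$ is determined by its source algebra on $L$, which is in turn constrained by the fusion system $\cF$ on $L$, the K\"ulshammer-Puig class in $H^2(\cF, k^\times)$, and an $\cF$-compatible $\cO$-structure on the source module. Puig's finiteness theorem bounds the number of saturated fusion systems on $L$, and $H^2(\cF, k^\times)$ is finite, so everything reduces to bounding the number of source algebras extending fixed local data.

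The hard part is precisely this last step, and it is where the conjecture remains open. For cyclic, Klein four, and certain other low-rank or highly structured defect groups one can classify the possible extensions explicitly, and Puig's theorem on nilpotent blocks provides a further class of cases. In general, however, no uniform bound on source algebras given the fusion system is known, and the subtle interplay between the source algebra and its embedding in $\cO H$ appears to require genuinely new techniques. Any plan I could write would founder at exactly this point, which is presumably why the present paper instead addresses the closely related but more tractable question about Morita Frobenius numbers rather than the full conjecture.
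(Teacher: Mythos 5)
You have correctly identified that this statement is a \emph{conjecture}, not a theorem: the paper states Donovan's conjecture without proof precisely because it remains open, and your write-up rightly acknowledges that the argument ``founders'' at the step of bounding source algebras given fixed local data. There is therefore no proof in the paper to compare against, and your survey of the standard reduction strategy (Clifford theory and Fong--Reynolds to reduce to quasi-simple groups, the classification of finite simple groups, Bonnaf\'e--Dat--Rouquier and Brou\'e's conjecture for groups of Lie type, Puig's finiteness theorem on fusion systems and the K\"ulshammer--Puig classes) is a fair account of the state of the art. One small clarification worth making explicit: the paper's own contribution is not an attack on Donovan's conjecture itself but rather a construction showing that $\cO$-Morita Frobenius numbers are unbounded across all defect groups, which is compatible with Conjecture~\ref{con:bdmf} (the boundedness is only conjectured for a \emph{fixed} isomorphism class of defect group), and which bears on Donovan's conjecture only through the Kessar and Eaton--Eisele--Livesey equivalences relating it to Conjecture~\ref{con:bdmf} plus Weak Donovan. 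So your final sentence slightly misstates the relationship: the paper does not ``address a more tractable question instead of the full conjecture'' as a substitute route to Donovan; it establishes unboundedness of a related invariant in a regime (unbounded defect) where Donovan makes no prediction.
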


A consequence of Donovan's conjecture stated over $\cO$ (respectively over $k$) is that $\cO$-Morita Frobenius numbers (respectively Morita Frobenius numbers) are bounded in terms of a function of the isomorphism class of the defect group.

\begin{con}\label{con:bdmf}
Let $L$ be a finite $l$-group. Then, amongst all finite groups $H$ and blocks $b$ (respectively $\overline{b}$) of $\cO H$ (respectively $kH$) with defect groups isomorphic to $L$, $\omf(b)$ (respectively $\mf(\overline{b})$) is bounded.
\end{con}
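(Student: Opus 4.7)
The final statement is a conjecture, and the abstract declares that this paper refutes its $\cO$-version (leaving the $k$-version open). So my ``proof proposal'' is really a counterexample construction: for each $N\in\NN$, I want to produce a finite group $H_N$ and a block $b_N$ of $\cO H_N$, all with a common defect group $L$, satisfying $\omf(b_N)\geq N$.

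The mechanism I would exploit is that $\omf$ is a source-algebra invariant (Puig), so I want source algebras whose $\cO$-structure depends essentially on primitive $l^n$th roots of unity for growing $n$. The Galois twist \eqref{algn:ring_auto} acts on $\cO$ via the $l$-power Frobenius on the maximal unramified subring, and a primitive $l^n$th root of unity in $\cO^\times$ is stabilised only after $n$ iterations. Concretely, if the basic algebra of $b$ can be written in the form $\cO_\alpha (L\rtimes E)$ with $E$ an $l'$-group acting on $L$ and $[\alpha]\in H^2(E,\cO^\times)$ of exact order $l^n$, then one would expect $\omf(b)\geq n$, since the Galois twists produce pairwise distinct cocycle classes within the orbit of $[\alpha]$ of size $n$.

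The construction would then have two ingredients. First, \emph{realisation}: exhibit, for each $n$, an actual finite group $H_n$ and a block $b_n$ with defect group $L$, inertial quotient $E$, and cocycle class of order $l^n$. The natural hunting grounds are normalisers of $l$-subgroups in groups of Lie type in non-defining characteristic, where Harish--Chandra theory produces twisted group algebras; central or wreath products of small building blocks to amplify the order of $\alpha$ while keeping $L$ under control; and, if necessary, central extensions of alternating or symmetric groups, using their Schur multipliers to inject $l^n$-torsion into the relevant cocycle. The critical design constraint is that $L$ must stay isomorphic across the whole family, so I would fix $L$ first (perhaps abelian, to keep the source algebra analysis tractable) and build the groups around it.

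The hard part will be \emph{rigidity}: I need to show that a Morita equivalence over $\cO$ between $\cO_\alpha G$ and $\cO_{\alpha'} G$ forces $[\alpha]=[\alpha']$, or at least that the Galois orbit of $[\alpha]$ is a Morita invariant. Morita equivalence allows enough flexibility -- in particular twists by invertible bimodules -- that a naive invariant such as a central character or a single matrix entry can be washed out. The cleanest route is probably to engineer the $b_n$ so that they are basic (so that Morita equivalence becomes isomorphism) or, failing that, to read $[\alpha]$ off a genuinely bimodule-theoretic invariant such as the image in $\Picent(b_n)$ or in the centre viewed as a module over $\cO$. Establishing such a rigidity statement uniformly across the family, while simultaneously keeping $L$ fixed as $n$ grows, is what I expect to be the real technical core of the construction.
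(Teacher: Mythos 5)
The statement you were asked about is a conjecture, and you correctly recognise that the only sensible ``proof'' would be a refutation. But you have misread what the paper actually claims. The paper does \emph{not} refute Conjecture~\ref{con:bdmf}. It says so in plain language in the introduction, immediately after stating the main theorem: ``Note that for a fixed $l$, the blocks constructed in Theorem~\ref{thm:main} do not have bounded defect. Therefore, the theorem does not contradict Conjecture~\ref{con:bdmf}.'' The main theorem produces, for each $n$, a block $b$ with $\omf(b)=n$, but the defect groups of these blocks grow with $n$: the defect group is $D_{t_1}\times D_{t_2}$, a product of two subgroups of $\prod_{x\in\mathbb{F}_p}C_{l^{t_i}}$, and the prime $p$ is chosen so that $p\equiv 1\pmod{l^n-1}$, which forces $p\to\infty$ as $n\to\infty$. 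The conjecture, as stated, quantifies over blocks with a \emph{fixed} defect group $L$, so the paper's construction does not touch it.

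Your proposal's ``critical design constraint'' --- that $L$ must stay isomorphic across the whole family --- is exactly the constraint that the paper's construction fails to satisfy, and the paper is upfront about this. What you are proposing is therefore a strictly stronger result than the one the paper proves, one that would genuinely refute Conjecture~\ref{con:bdmf} (and with it the $\cO$-version of Donovan's conjecture, by the equivalence in \cite{eaeili19}). That problem remains open, and your sketch gives no concrete mechanism for achieving it: you acknowledge yourself that ``establishing such a rigidity statement uniformly across the family, while simultaneously keeping $L$ fixed as $n$ grows, is what I expect to be the real technical core,'' but you offer no idea for how to produce arbitrarily high-order cocycle classes on a fixed $l'$-inertial quotient acting on a fixed $L$, nor why the obvious obstruction --- that $H^2(E,\cO^\times)$ is finite for fixed $E$ --- wouldn't cap the attainable orders. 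In short: the proposal is aimed at a different and much harder target than the paper, and it does not contain the idea needed to hit it.

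Two smaller remarks. First, your intuition about twisted group algebras and Galois action on cocycle classes is in the right spirit: the paper's group $E$ is indeed a central extension of $F_1\times F_2$ by $Z=\mathbb{F}_p^\times$, and the character $\vartheta$ of $Z_{l'}$ plays the role your cocycle $\alpha$ would play, with $\omf(B_\vartheta)$ being read off from the Frobenius orbit of $\vartheta$. Second, the paper's rigidity argument (Proposition~\ref{prop:ME}) does not go through $\Picent$ or basic algebras; it uses Puig's classification of trivial-source bimodules inducing Morita equivalences, a rank bound forcing equality in a Cauchy--Schwarz-type estimate, and the explicit normaliser computation of Lemma~\ref{lem:norm} to pin down the stabiliser of the bimodule. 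That chain of reductions is what actually delivers the ``rigidity'' you flag as the hard part, but only because the defect group is allowed to vary.
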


In~\cite[Theorem 1.4]{ke04} Kessar proved that Donovan's conjecture stated over $k$ is equivalent to Conjecture~\ref{con:bdmf} stated over $k$ together with the so-called Weak Donovan conjecture.

\begin{con}[Weak Donovan]
Let $L$ be a finite $l$-group. Then there exists $c(L)\in\mathbb{N}$ such that if $H$ is a finite group and $b$ is a block of $\cO H$ with defect groups isomorphic to $L$, then the entries of the Cartan matrix of $b$ are at most $c(L)$.
\end{con}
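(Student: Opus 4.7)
The Weak Donovan conjecture is a well-known open problem, so any strategy I can sketch here is necessarily programmatic rather than concrete. My plan would follow the standard reduction philosophy used for Donovan-type statements. The first step is to reduce the problem to quasi-simple groups via the Fong--Reynolds correspondence and the usual Clifford-theoretic machinery. Since Cartan matrices are preserved (up to permutation of rows/columns) under source-algebra equivalences, and since extensions of a defect group by a fixed $l$-group produce only boundedly many new blocks up to the relevant equivalence, a uniform bound on Cartan entries for quasi-simple groups with defect group isomorphic to $L$ would yield the general statement up to a function of $L$ arising from the reduction steps.

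The second step is to invoke the classification of finite simple groups and attack each family in turn. Sporadic groups constitute a finite problem that can in principle be settled by direct inspection of decomposition matrices. For symmetric and alternating groups, an $l$-block of weight $w$ has defect group determined by $w$, and its Cartan entries can be read off from the combinatorics of $l$-cores and $l$-quotients, giving an explicit bound depending only on $L$. For groups of Lie type in defining characteristic the block theory is rigid enough that dimensions of projective covers are controlled by $L$ through the structure of the Sylow $l$-subgroup. The remaining substantive case, which will dominate the work, is groups of Lie type in non-defining characteristic; here one would hope to combine Deligne--Lusztig theory, generic theory of unipotent blocks, and Brou\'e's abelian defect group conjecture, which, where it is known, yields a derived equivalence with a block of $N_H(D)$ whose Cartan entries are controlled by $|D|$.

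\textbf{Main obstacle.} The hard part will certainly be the cross-characteristic blocks of groups of Lie type with non-abelian defect groups. For a fixed finite $l$-group $L$, blocks with defect group isomorphic to $L$ can arise in an unbounded family of quasi-simple groups of Lie type, for instance among the principal $l$-blocks of $\GL_n(q)$ for suitable parameters $(n,q,l)$, and one has to force the Cartan entries to stay bounded as the ambient group grows. Without Brou\'e's conjecture in hand for these blocks, there is currently no uniform mechanism to produce such a bound, and this is precisely what makes the Weak Donovan conjecture genuinely open.
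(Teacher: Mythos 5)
This statement is labelled as a \emph{Conjecture} in the paper (the Weak Donovan conjecture), and the paper gives no proof of it --- it is a well-known open problem that is merely quoted for context, to explain how Donovan's conjecture decomposes into Weak Donovan plus the boundedness of (\(\cO\)-)Morita Frobenius numbers. There is therefore no ``paper's own proof'' to compare your attempt against. You have correctly recognised this: your submission is explicitly a programmatic sketch rather than a proof, and as such it cannot be assessed as a proof of the statement. The reduction philosophy you outline (Fong--Reynolds and Clifford theory down to quasi-simple groups, then the classification, with cross-characteristic Lie type as the hard case) is the standard expert view of how one might hope to approach Weak Donovan, and your identification of non-abelian defect groups in cross-characteristic Lie type as the principal obstruction is accurate. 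But none of this constitutes a proof, and none of it is something the paper claims or needs; the paper simply cites the conjecture. If the task was to reproduce the paper's argument for this item, the correct answer is that there is none.
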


In~\cite[Theorem 3.11]{eaeili19} Eaton, Eisele and the author proved that Donovan's conjecture stated over $\cO$ is equivalent to Conjecture~\ref{con:bdmf} stated over $\cO$ together with the Weak Donovan conjecture.
\newline
\newline
The question of whether Morita Frobenius numbers of blocks defined over $k$ have a universal bound is one that has gained much interest in recent years. In~\cite[Examples 5.1,5.2]{bk07} Benson and Kessar constructed blocks with Morita Frobenius number two, the first discovered to be greater than one. The relevant blocks all have a normal, abelian defect group and abelian $l'$ inertial quotient with a unique isomorphism class of simple modules. It was also proved that among such blocks the Morita Frobenius numbers cannot exceed two~\cite[Remark 3.3]{bk07}. In work of Benson, Kessar and Linckelmann~\cite[Theorem 1.1]{bkl2018} the bound of two was extended to blocks that don't necessarily have a unique isomorphism class of simple modules. It was also shown that the bound of two applies to $\cO$-Morita Frobenius numbers of the corresponding blocks defined over $\cO$. Finally, Farrell~\cite[Theorem 1.1]{far17} and Farrell and Kessar~\cite[Theorem 1.1]{fake19} proved that the $\cO$-Morita Frobenius number of any block of a finite quasi-simple group is at most four.
\newline
\newline
Our main result (see Theorem~\ref{thm:main}) is as follows:

\begin{thm*}
For every prime $l$ and $n\in\mathbb{N}$, there exists an $\cO$-block $b$ with $\omf(b)=n$.
\end{thm*}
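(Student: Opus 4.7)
I would exploit the connection between $\omf$ and Galois orbits of cohomology classes arising from central extensions. For each prime $l$ and $n\in\NN$, first invoke Zsigmondy's theorem to produce a prime $q\neq l$ with $\mathrm{ord}_q(l)=n$, so that the $l$th-power Frobenius $\zeta\mapsto\zeta^l$ has an orbit of size exactly $n$ on the primitive $q$th roots of unity in $\cO^\times$. The finitely many exceptional pairs $(l,n)$ (e.g.\ $l=2$, $n=6$) would be handled separately by replacing $q$ by a suitable prime power $q^m$; for instance for $(l,n)=(2,6)$ one has $\mathrm{ord}_9(2)=6$.

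The second step is to realise an $\cO$-block $b$ that is Morita equivalent to a twisted group algebra $\cO_\alpha G$ whose twist $\alpha\in Z^2(G,\mu_q)$ has Frobenius orbit of length exactly $n$. A natural way to engineer this is via a central extension $1\to Z\to\tilde G\to G\to 1$ with $Z\cong C_q$ and non-trivial cohomology class: the block component of $\cO\tilde G$ attached to a non-trivial character $\chi\colon Z\to\cO^\times$ is Morita equivalent to $\cO_{\alpha_\chi}G$, where $\alpha_\chi$ is the pushforward of the extension class along $\chi$. Since Frobenius sends $\chi$ to $\chi^l$, it sends $[\alpha_\chi]$ to $[\alpha_{\chi^l}]=[\alpha_\chi]^l$, giving an orbit whose length divides $n$. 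To endow $b$ with genuine $l$-defect, I would fold in an $l$-part by working with $\tilde G\times P$ or a semidirect product $\tilde G\ltimes P$ for an abelian $l$-group $P$; the defect group is then $P$ and the block still decomposes along characters of $Z$.

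The crucial step is to ensure that the $n$ iterated twists $[\alpha_\chi],[\alpha_\chi^l],\dots,[\alpha_\chi^{l^{n-1}}]$ give $n$ genuinely distinct Morita equivalence classes, rather than collapsing. Naive choices fail: for $G=C_q\times C_q$ the automorphism group $\GL_2(\mathbb{F}_q)$ acts transitively on non-zero classes in $H^2(G,\mu_q)\cong\mu_q$, and every non-trivially twisted group algebra is just $M_q(\cO)$, so all Frobenius twists collapse into a single Morita class. The construction must therefore use $G$ rigid enough — ideally non-abelian, or structured so that the twisted group algebra retains a non-trivial basic algebra whose isomorphism type changes with the scalar twist — in such a way that a genuine Morita invariant (e.g.\ the basic algebra, the Cartan matrix, or the isomorphism type of a distinguished subquotient) separates the $n$ twists.

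With such $G$ and $b$ in place, the bound $\omf(b)\leq n$ is immediate from the Frobenius orbit size on $\mu_q$, and the matching lower bound $\omf(b)\geq n$ follows from the separating invariant. The main obstacle, and the real content of the theorem, is the rigidity construction: producing a group $G$ (likely non-abelian, or built from a carefully arranged fusion system) together with a cocycle whose Morita class is sensitive to all $n$ distinct Frobenius twists in $\mu_q$.
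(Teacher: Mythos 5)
Your high-level plan does match the paper's strategy: the paper constructs a finite group $G_{l'}=(D_{t_1}\times D_{t_2})\rtimes E_{l'}$ in which $E_{l'}$ contains a central cyclic $l'$-subgroup $Z_{l'}$, defines $B_\varphi:=\cO G_{l'}e_\varphi$ for $\varphi\in\Irr(Z_{l'})$, and exploits the fact that the Frobenius twist sends $B_\varphi$ to $B_{\varphi^l}$, so that $\omf(B_\vartheta)$ equals the multiplicative order of $l$ modulo the order of $\vartheta$. Your Zsigmondy step plays the same role as the paper's choice (via Dirichlet) of a prime $p$ with $p\equiv 1\pmod{l^n-1}$ followed by selecting $\vartheta$ of order $l^n-1$. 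Your observation that a naive choice such as $G=C_q\times C_q$ collapses all non-trivial twists into a single Morita class is correct and shows you see where the difficulty lies.

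The difficulty you see is, however, exactly where your argument stops. You write that ``the main obstacle, and the real content of the theorem, is the rigidity construction,'' and you offer no solution: you do not exhibit a group $G$, a cocycle, or a Morita invariant that separates the $n$ twists. That is not a small missing step --- it is the entire theorem. The paper's Proposition~\ref{prop:ME} is precisely this rigidity statement, and its proof is far from an appeal to an off-the-shelf invariant. It proceeds by showing a putative Morita bimodule $M$ between $B_\varphi$ and $B_\vartheta$ must have trivial source (via Lemma~\ref{lem:Dkernel} and descent to $B_\varphi^D$, $B_\vartheta^D$, using Proposition~\ref{prop:indMor}), then invoking Puig's structure theorem to write $M$ as a summand of an induced linear-source module, then performing a rank count (Lemma~\ref{lem:rank} together with the faithfulness statement in Lemma~\ref{lem:norm}) to pin down the inertia group and force $\cO D\otimes_\cO{}_\varphi\cO_\vartheta$ to extend to the diagonal subgroup, and finally using the commutator relation (\ref{algn:comm}) --- which guarantees $Z_{l'}\leq[E_{l'},E_{l'}]$ --- to conclude $\varphi=\vartheta$. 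None of the obvious invariants you gesture at (Cartan matrices, basic algebras, subquotients) would work directly here, because the blocks $B_\varphi$ for distinct $\varphi$ are isomorphic as abstract $\cO$-algebras after an automorphism of $\cO$; the separation genuinely requires pinning down the source and vertex of the bimodule and then extracting a one-dimensional representation of $\Delta Z_{l'}$. There is also a subtlety you have not addressed: with $t_1=t_2$ one actually has $B_\varphi\cong B_{\varphi^{-1}}$ (Lemma~\ref{lem:autos}(2)), which would spoil the count, so the paper must take $t_1\neq t_2$; any construction along your lines needs a similar asymmetry to rule out such accidental equivalences.

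In short: right roadmap, but the theorem's substance --- building a specific rigid group and proving the Morita-separation of the twisted blocks --- is absent from your proposal.
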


The proof of the above theorem presented in this paper relies heavily on the fact that the blocks are defined over $\cO$. Specifically, the result~\cite[Propostion 4.4]{eali19} quoted in the proof of Proposition~\ref{prop:indMor} is a result that ultimately depends on Weiss' criterion~\cite{we88}. Weiss' criterion is a result concerning permutation modules for groups algebras of $l$-groups that does not hold over $k$.
\newline
\newline
Notwithstanding the previous paragraph, there are no known examples of two blocks defined over $\cO$, with isomorphic defect groups, that are not Morita equivalent but the corresponding blocks defined over $k$ are. Therefore, Theorem~\ref{thm:main} strongly suggests that Morita Frobenius numbers are also unbounded. This would answer two questions posed by Benson and Kessar~\cite[Questions 6.2,6.3]{bk07}. Note that for a fixed $l$, the blocks constructed in Theorem~\ref{thm:main} do not have bounded defect. Therefore, the theorem does not contradict Conjecture~\ref{con:bdmf}.
\newline
\newline
The following notation will hold throughout this article. If $H$ is a finite group and $b$ a block of $\cO H$, then we set $\Irr(H)$ (respectively $\IBr(H)$) to be the set of ordinary irreducible (respectively irreducible Brauer) characters of $H$ and $\Irr(b)\subseteq\Irr(H)$ (respectively $\IBr(b)\subseteq\IBr(H)$) the set of ordinary irreducible (respectively irreducible Brauer) characters lying in the block $b$. If $N\lhd H$ and $\chi\in\Irr(N)$, then we denote by $\Irr(H,\chi)$ the set of irreducible characters of $H$ appearing as constituents of $\chi\uparrow^H$. Similarly we define $\Irr(b,\chi):=\Irr(b)\cap\Irr(H,\chi)$. $1_H\in\Irr(H)$ will designate the trivial character of $H$. We use $e_b\in\cO H$ to denote the block idempotent of $b$. Similarly if $H$ is a $p'$-group and $\varphi\in\Irr(H)$, then we use $e_\varphi\in\cO H$ to signify the block idempotent corresponding to $\varphi$. Finally we set $[h_1,h_2]:=h_1^{-1}h_2^{-1}h_1h_2$ for $h_1,h_2\in H$.
\newline
\newline
The article is organised as follows. In $\S$\ref{sec:prelim} we establish some preliminaries about Morita equivalences between blocks of finite groups. $\S$\ref{sec:def} contains the definition of the blocks $B_\varphi$ that are then used in $\S$\ref{sec:main} to prove our main theorem.

\section{Morita equivalences between blocks}\label{sec:prelim}

For any free $\cO$-module $M$, we will denote by $\rk_{\cO}(M)$ its rank as a module over $\cO$. All $\cO$-modules considered will have finite rank.

\begin{lem}\label{lem:rank}
Let $H_1$ (respectively $H_2$) be a finite group and $b_1$ (respectively $b_2$) a block of $\cO H_1$ (respectively $\cO H_2$) such that $\rk_{\cO}(b_1)=\rk_{\cO}(b_2)$. Then any $b_1$-$b_2$-bimodule $M$ inducing a Morita equivalence between $b_1$ and $b_2$ must satisfy $\rk_{\cO}(M)\leq\rk_{\cO}(b_1)$.
\end{lem}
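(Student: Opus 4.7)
The plan is to extend scalars to $K$ and reduce the stated inequality to a Cauchy--Schwarz estimate on the dimensions of the Wedderburn components.

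First I would observe that any bimodule $M$ inducing a Morita equivalence between $b_1$ and $b_2$ is, by classical Morita theory, a finitely generated projective generator as a left $b_1$-module and as a right $b_2$-module. In particular $M$ is $\cO$-free, so that $\rk_\cO(M) = \dim_K(K\otimes_\cO M)$. Setting $A_i := K\otimes_\cO b_i$ and $M_K := K\otimes_\cO M$, extension of scalars preserves the Morita equivalence, so $M_K$ induces a Morita equivalence between the $K$-algebras $A_1$ and $A_2$, and the hypothesis reads $\dim_K A_1 = \dim_K A_2 = \rk_\cO(b_1)$.

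Next, because $K$ contains a primitive $|H_j|^{\nth}$ root of unity for $j=1,2$, each $A_i$ is split semisimple, so I may write Wedderburn decompositions
\[
A_1 \cong \bigoplus_{i\in I}\Mat_{n_i}(K), \qquad A_2 \cong \bigoplus_{i\in I}\Mat_{m_i}(K),
\]
the Morita equivalence supplying the bijection between the two index sets. Along this decomposition $M_K$ splits as a direct sum of the essentially unique Morita bimodules between matching matrix factors, giving $\dim_K M_K = \sum_{i\in I} n_i m_i$, while $\dim_K A_1 = \sum_i n_i^2$ and $\dim_K A_2 = \sum_i m_i^2$.

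The conclusion is then immediate from the Cauchy--Schwarz inequality:
\[
\rk_\cO(M) = \sum_{i\in I} n_i m_i \leq \Bigl(\sum_{i\in I} n_i^2\Bigr)^{1/2}\Bigl(\sum_{i\in I} m_i^2\Bigr)^{1/2} = \rk_\cO(b_1).
\]
I do not expect any substantive obstacle; the only real content is the observation that over $K$ the Morita bimodule between the two semisimple algebras is rigidly determined by their Wedderburn decompositions, after which the bound is forced by the equality of the two sums of squares.
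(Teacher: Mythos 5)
Your proof is correct and is essentially the paper's argument: the paper phrases it directly in terms of the character bijection $\sigma:\Irr(b_1)\to\Irr(b_2)$ induced by $M$, writing $\rk_\cO(M)=\sum_{\chi\in\Irr(b_1)}\chi(1)\sigma(\chi)(1)$, which is exactly your $\sum_i n_i m_i$ with the $n_i=\chi(1)$ and $m_i=\sigma(\chi)(1)$ read off the Wedderburn decomposition. The decisive step in both is the same Cauchy--Schwarz estimate against $\rk_\cO(b_1)=\rk_\cO(b_2)$.
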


\begin{proof}
Let $M$ be such a bimodule and $\sigma:\Irr(b_1)\to\Irr(b_2)$ the corresponding bijection of characters. Then
\begin{align*}
K\otimes_{\cO}M \cong \bigoplus_{\chi\in\Irr(b_1)}V_\chi \otimes_K V_{\sigma(\chi)}^*,
\end{align*}
where $V_\chi$ denotes a $KH_1$-module affording $\chi$ and $V_{\sigma(\chi)}^*$ denotes the dual of a $KH_2$-module affording $\sigma(\chi)$. Therefore,
\begin{align*}
\rk_{\cO}(M)&=\sum_{\chi\in\Irr(b_1)}\chi(1)\sigma(\chi)(1)
\leq\sqrt{\left(\sum_{\chi\in\Irr(b_1)}\chi(1)^2\right)\left(\sum_{\psi\in\Irr(b_2)}\psi(1)^2\right)}\\
&=\sqrt{\rk_{\cO}(b_1)\rk_{\cO}(b_2)}=\rk_{\cO}(b_1),
\end{align*}
where the inequality follows from the Cauchy-Schwarz inequality.
\end{proof}

Let $b$ be a block of $\cO H$, for some finite group $H$ and $Q$ a normal $p$-subgroup of $H$. We denote by $b^Q$ the direct sum of blocks of $\cO(H/Q)$ dominated by $b$, that is those blocks not annihilated by the image of $e_b$ under the natural $\cO$-algebra homomorphism $\cO H\to \cO(H/Q)$. Also, for any pair of finite groups $H_1,H_2$ and $\cO H_1$-$\cO H_2$-bimodule $M$ we routinely view $M$ as an $\cO(H_1\times H_2)$ via $(h_1,h_2).m=h_1mh_2^{-1}$, for $h_1\in H_1$, $h_2\in H_2$ and $m\in M$.

\begin{prop}\label{prop:indMor}
Let $H_1$ (respectively $H_2$) be a finite group, with $Q_1\lhd H_1$ (respectively $Q_2\lhd H_2$) a normal $l$-subgroup and $b_1$ (respectively $b_2$) a block of $\cO H_1$ (respectively $\cO H_2$).
\begin{enumerate}
\item Suppose $M$ is a $b_1$-$b_2$-bimodule inducing a Morita equivalence between $b_1$ and $b_2$ such that the corresponding bijection $\Irr(b_1)\to\Irr(b_2)$ restricts to a bijection $\Irr(b_1,1_{Q_1})\to\Irr(b_2,1_{Q_2})$. Then ${}^{Q_1}M=M^{Q_2}$, the set of fixed points of $M$ under the left action of $Q_1$ (respectively the right action of $Q_2$), induces a Morita equivalence between $b_1^{Q_1}$ and $b_2^{Q_1}$. 
\item If, in addition, ${}^{Q_1}M=M^{Q_2}$ has trivial source, when considered as an $\cO((H_1/Q_1)\times(H_2/Q_2))$-module, then $M$ has trivial source, when considered as an $\cO(H_1\times H_2)$-module.
\end{enumerate}
\end{prop}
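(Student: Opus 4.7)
For part (1), I begin with a character computation. Since $M$ induces a Morita equivalence between $b_1$ and $b_2$, its character as an $\cO(H_1\times H_2)$-module decomposes as $\chi_M=\sum_{\chi\in\Irr(b_1)}\chi\otimes\sigma(\chi)^*$, where $\sigma:\Irr(b_1)\to\Irr(b_2)$ is the induced character bijection and $\otimes$ denotes the outer tensor product. Since $Q_i\lhd H_i$ is a normal $l$-subgroup, a standard application of Clifford's theorem gives $\langle\chi\downarrow_{Q_i},1_{Q_i}\rangle=\chi(1)$ when $\chi\in\Irr(H_i,1_{Q_i})$ and $0$ otherwise. Combining this with the averaging formula for fixed-point dimensions and the hypothesis on $\sigma$, I obtain
\begin{equation*}
\rk_{\cO}({}^{Q_1}M)=\rk_{\cO}(M^{Q_2})=\rk_{\cO}(M^{Q_1\times Q_2})=\sum_{\chi\in\Irr(b_1,1_{Q_1})}\chi(1)\sigma(\chi)(1).
\end{equation*}
As each of these fixed-point submodules is $\cO$-pure in $M$ and $M^{Q_1\times Q_2}\subseteq{}^{Q_1}M\cap M^{Q_2}$, equality of ranks forces ${}^{Q_1}M=M^{Q_2}$ over $\cO$; writing $M'$ for this common submodule, the fact that $e_{b_i}$ acts as the identity on $M$ descends to the image of $e_{b_i}$ in $\cO(H_i/Q_i)$ acting as the identity on $M'$, yielding the required $b_1^{Q_1}$-$b_2^{Q_2}$-bimodule structure.

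The Morita equivalence for $M'$ follows from the norm map and a restriction argument. Since $M$ is projective as a left $b_1$-module and $b_1$ is a direct summand of the $\cO Q_1$-free module $\cO H_1$, $M$ is $\cO Q_1$-free on the left, so the norm map is an isomorphism $M/I(Q_1)M\cong M^{Q_1}$ of $\cO(H_1/Q_1)$-modules. This identifies $M'$ with $b_1^{Q_1}\otimes_{b_1}M$, and by the symmetric argument on the right $M'\cong b_1^{Q_1}\otimes_{b_1}M\otimes_{b_2}b_2^{Q_2}$ as a $b_1^{Q_1}$-$b_2^{Q_2}$-bimodule. The functor $-\otimes_{b_1^{Q_1}}M'$ on $\mathrm{mod}\text{-}b_1^{Q_1}$ therefore coincides with the restriction of the Morita functor $-\otimes_{b_1}M$ to modules inflated from $b_1^{Q_1}$. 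Since every simple $kH_i$-module has the normal $l$-subgroup $Q_i$ in its kernel, modules in $\mathrm{mod}\text{-}b_i^{Q_i}$ are precisely those $b_i$-modules on which $Q_i$ acts trivially; this together with the character hypothesis guarantees that the restricted functor lands in $\mathrm{mod}\text{-}b_2^{Q_2}$, and applying the same construction to an inverse bimodule of $M$ produces a quasi-inverse, so $M'$ induces a Morita equivalence.

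For part (2), I reduce to the analogous statement over $k$ using that trivial source $\cO$-modules are classified up to isomorphism by their reductions. The $\cO Q_i$-freeness of $M$ established above ensures that $k\otimes_{\cO}M'\cong(kM)^{Q_1\times Q_2}$, so the latter is a $p$-permutation $k((H_1/Q_1)\times(H_2/Q_2))$-module by hypothesis. Each indecomposable summand $X$ of $kM$ has a vertex $V\leq H_1\times H_2$ which, by the left and right projectivity of $M$, satisfies $V\cap(H_1\times 1)=V\cap(1\times H_2)=1$; and by the part~(1) argument applied to $X$, the summand $X^{Q_1\times Q_2}$ of $(kM)^{Q_1\times Q_2}$ is non-zero and trivial source, with vertex the image $V(Q_1\times Q_2)/(Q_1\times Q_2)$ of $V$ in $(H_1/Q_1)\times(H_2/Q_2)$. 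The principal obstacle is the final step: deducing triviality of the source of $X$ on $V$ from triviality of the source of $X^{Q_1\times Q_2}$. This requires a Mackey-style analysis identifying the source of $X^{Q_1\times Q_2}$ with the $V\cap(Q_1\times Q_2)$-fixed points of the source of $X$, together with an argument that for bimodules arising from Morita equivalences between blocks the extra structure forces these fixed points to detect triviality; this step is where I expect the argument to be most delicate.
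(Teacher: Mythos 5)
The paper does not actually prove this proposition: it cites Eaton--Livesey~\cite[Props.\ 4.3, 4.4]{eali19} for the case $H_1=H_2$, $Q_1=Q_2$, $b_1=b_2$ and says the general case is identical. You are therefore reconstructing a proof from scratch, and the two parts of your attempt are in very different shape. Part (1) is essentially sound: the character computation correctly yields $\rk_\cO({}^{Q_1}M)=\rk_\cO(M^{Q_2})=\rk_\cO(M^{Q_1\times Q_2})$, purity then gives ${}^{Q_1}M=M^{Q_2}=M^{Q_1\times Q_2}$, and the norm-map identification $M/I(Q_1)M\cong{}^{Q_1}M$ (valid because $M$, being projective over $b_1$, is $\cO Q_1$-free) is the right tool to show $-\otimes_{b_1^{Q_1}}M'$ is the restriction of $-\otimes_{b_1}M$ and that the inverse bimodule produces a quasi-inverse. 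The aside about simple $kH_i$-modules having $Q_i$ in their kernel is a red herring (the identification of $b_i^{Q_i}$-modules with $Q_i$-trivial $b_i$-modules is formal), and the reason the restricted functor lands in $b_2^{Q_2}$-mod is really the norm map plus ${}^{Q_1}M=M^{Q_2}$, but the substance is there.

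Part (2) has a genuine gap, and in fact two. First, the reduction to $k$ is not legitimate as stated: a trivial-source $k$-module has a \emph{unique} trivial-source $\cO$-lift, but an arbitrary $\cO$-lattice whose reduction is trivial source need not itself be trivial source. For instance, a rank-one $\cO C_l$-lattice on which a generator acts by a primitive $l$-th root of unity in $\cO^\times$ reduces to the trivial source $kC_l$-module $k$, yet is not itself a summand of any permutation $\cO C_l$-module. So even if you proved $kM$ were trivial source, you would still not have proved $M$ is. Second, and more fundamentally, the step you yourself flag as ``the most delicate'' --- passing from triviality of the source of the Brauer-type quotient $M^{Q_1\times Q_2}$ to triviality of the source of $M$ --- is precisely where all the content of part (2) lives, and you have left it as a sketch of a hoped-for Mackey argument rather than a proof. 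The argument needs to be run over $\cO$ throughout and has to explain why the vertex of $M$, necessarily a twisted diagonal $\Delta_\gamma R$, together with the trivial-source property of $M'$ and the norm map, forces the source of $M$ to be the trivial $\cO\Delta_\gamma R$-module. Until that step is supplied, part (2) is unproved.
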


\begin{proof}
This is proved in~\cite[Propostions 4.3,4.4]{eali19}, with the added assumption that $H_1=H_2$, $Q_1=Q_2$ and $b_1=b_2$. However, the proof in this more general setting is identical.
\end{proof}

\section{Definition of $B_\varphi$}\label{sec:def}

In this section we provide a number of necessary preliminary results before defining the blocks $B_\varphi$ that will ultimately form the family of blocks with arbitrarily large $\cO$-Morita Frobenius numbers in the proof of Theorem~\ref{thm:main}.
\newline
\newline
Until further notice we fix a prime $p\neq l$ such that $p-1$ is not a power of $l$. We set $a:=v_l(p-1)$, the largest power of $l$ dividing $p-1$. For $t\in\mathbb{N}$, we define $\Omega_t$ to be the direct product of $p$ copies of $C_{l^t}$ indexed by the elements of $\mathbb{F}_p$,
\begin{align*}
\Omega_t:=\prod_{x\in\mathbb{F}_p}C_{l^t}.
\end{align*}
We also define following the subgroups of $\Omega_t$,
\begin{align*}
D_t:=\left\{(g_x)_{x\in\mathbb{F}_p}\in\Omega_t\Big{|}\prod_{x\in\mathbb{F}_p}g_x=1\right\},&&\Lambda_t:=\{(g,\dots,g)|g\in C_{l^t}\}.
\end{align*}
We set $F:=\mathbb{F}_p\rtimes\mathbb{F}_p^\times$, with multiplication given by
\begin{align*}
(x,\alpha).(y,\beta)=(x+\alpha y,\alpha\beta),
\end{align*}
for $x,y\in \mathbb{F}_p$ and $\alpha,\beta\in\mathbb{F}_p^\times$. Note that $F$ acts on $\mathbb{F}_p$ via
\begin{align*}
(x,\alpha).y=x+\alpha y,
\end{align*}
for all $x,y\in \mathbb{F}_p$ and $\alpha\in\mathbb{F}_p^\times$. Therefore, $F$ acts on $\Omega_t$ by permuting indices and $\Omega_t=D_t\times\Lambda_t$ is an $F$-stable direct decomposition of $\Omega_t$.

\begin{lem}\label{lem:FpStable}
Let $t\in\mathbb{N}$ and $\theta\in\Irr(\Omega_t)$. Viewing $\mathbb{F}_p\leq F$, $\theta$ is $\mathbb{F}_p$-stable if and only if $\theta=1_{D_t}\otimes \theta_{\Lambda_t}$, for some $\theta_{\Lambda_t}\in\Irr(\Lambda_t)$. In particular, the only irreducible, $\mathbb{F}_p$-stable character of $D_t$ is $1_{D_t}$.
\end{lem}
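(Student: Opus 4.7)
The plan is to exploit that $\Omega_t$ is abelian so that every $\theta\in\Irr(\Omega_t)$ decomposes canonically as a tensor product $\theta=\bigotimes_{x\in\mathbb{F}_p}\chi_x$ with $\chi_x\in\Irr(C_{l^t})$. The first step is to observe that the $\mathbb{F}_p$-action on $\Omega_t$ is by translation of indices, hence transitive on the $p$ factors, so the induced action on $\Irr(\Omega_t)$ permutes the tuple $(\chi_x)_{x\in\mathbb{F}_p}$ transitively. Consequently $\theta$ is $\mathbb{F}_p$-stable if and only if all the $\chi_x$ coincide with a common character $\chi\in\Irr(C_{l^t})$.

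Second, for the ``only if'' direction, granted $\chi_x=\chi$ for every $x$, evaluating on any $d=(d_x)_x\in D_t$ gives $\theta(d)=\chi\bigl(\prod_x d_x\bigr)=\chi(1)=1$ by definition of $D_t$. Thus $\theta$ restricts trivially to $D_t$, which under the canonical identification $\Irr(\Omega_t)=\Irr(D_t)\times\Irr(\Lambda_t)$ coming from $\Omega_t=D_t\times\Lambda_t$ is precisely the statement $\theta=1_{D_t}\otimes\theta_{\Lambda_t}$ for some $\theta_{\Lambda_t}\in\Irr(\Lambda_t)$.

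Third, the ``if'' direction is immediate because $\mathbb{F}_p$ fixes every element of the diagonal subgroup $\Lambda_t$ pointwise, hence fixes every character of $\Lambda_t$; combined with the $F$-stability of the decomposition $\Omega_t=D_t\times\Lambda_t$ and the obvious stability of $1_{D_t}$, any character of the form $1_{D_t}\otimes\theta_{\Lambda_t}$ is $\mathbb{F}_p$-stable. The ``in particular'' assertion then follows by inflation: any $\mathbb{F}_p$-stable $\theta_{D_t}\in\Irr(D_t)$ yields the $\mathbb{F}_p$-stable character $\theta_{D_t}\otimes 1_{\Lambda_t}$ of $\Omega_t$, whose $D_t$-component must, by the main equivalence, be trivial, forcing $\theta_{D_t}=1_{D_t}$. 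I do not foresee a substantial obstacle: the whole argument reduces to the transitivity of the translation action on the factors of $\Omega_t$ together with the defining relation $\prod_x d_x=1$ on $D_t$.
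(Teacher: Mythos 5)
Your proof is correct and takes essentially the same route as the paper: both exploit that $\Omega_t$ is abelian so $\theta$ is a linear (tensor-product) character, use $\mathbb{F}_p$-stability to force the factor characters to agree, and then evaluate on $(g_x)_x\in D_t$ to conclude $D_t\subseteq\ker\theta$ via $\prod_x g_x=1$. The paper phrases this as a single direct evaluation of $\theta$ on an element of $D_t$, while you make the tensor decomposition and the ``constant tuple'' characterisation explicit, but the underlying argument is the same.
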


\begin{proof}
Certainly $\theta=1_{D_t}\otimes \theta_{\Lambda_t}$ is $\mathbb{F}_p$-stable. For the converse, suppose $\theta\in\Irr(\Omega_t)$ is $\mathbb{F}_p$-stable and $(g_x)_{x\in\mathbb{F}_p}\in D_t$. Then
\begin{align*}
\theta(g_0,g_1,\dots)&=\theta(g_0,1,1,\dots)\theta(1,g_1,1,\dots)\dots\theta(1,\dots,1, g_{p-1})\\
&=\theta\left(\prod_{x\in\mathbb{F}_p}g_x,1,1,\dots,1\right)=1.
\end{align*}
In other words, $D_t$ is contained in the kernel of $\theta$. The claim follows.
\end{proof}

From now on we fix a generator $\lambda$ of $\mathbb{F}_p^\times$. We define the group $E$ to be
\begin{align*}
\mathbb{F}_p\times\mathbb{F}_p\times\mathbb{F}_p^\times\times\mathbb{F}_p^\times\times\mathbb{F}_p^\times
\end{align*}
as a set, with multiplication given by
\begin{align*}
&(x_1,y_1,\lambda^{m_1},\lambda^{n_1},\mu_1).(x_2,y_2,\lambda^{m_2},\lambda^{n_2},\mu_2)\\
=&(x_1+\lambda^{m_1}x_2,y_1+\lambda^{n_1}y_2,\lambda^{m_1+m_2},\lambda^{n_1+n_2},\mu_1\mu_2\lambda^{n_1m_2}),
\end{align*}
for $x_1,y_1,x_2,y_2\in\mathbb{F}_p$, $\mu_1,\mu_2\in\mathbb{F}_p^\times$ and $m_1,n_1,m_2,n_2\in\mathbb{N}_0$.
Setting $Z:=\mathbb{F}_p^\times$, we have the short exact sequence
\begin{align}\label{algn:ses}
1\longrightarrow Z\xrightarrow{\eta} E\xrightarrow{\phi} F_1\times F_2\longrightarrow 1,
\end{align}
where $F_1\cong F_2\cong F$,
\begin{align*}
\eta(\mu)=(0,0,1,1,\mu)&&\text{ and }&&
\phi(x,y,\lambda^m,\lambda^n,\mu)=((x,\lambda^m),(y,\lambda^n)),
\end{align*}
for all $x,y\in\mathbb{F}_p$, $\mu\in\mathbb{F}_p^\times$ and $m,n\in\mathbb{N}_0$. We identify $Z$ with its image under $\eta$. Note that
\begin{align}\label{algn:comm}
[\widetilde{(x,\lambda^m)},\widetilde{(y,\lambda^n)}]=\lambda^{-mn}\in Z,
\end{align}
where $(x,\lambda^m)\in F_1$, $(y,\lambda^n)\in F_2$ and the tildes denote lifts to $E$. Let $\xi$ be a generator of $\mathbb{F}_p^\times$. We set
\begin{align*}
P_1&:=\{(x,0,1,1,1)\in E|x\in\mathbb{F}_p\},\text{ }P_2:=\{(0,y,1,1,1)\in E|y\in\mathbb{F}_p\},\\
P&:=P_1\times P_2\in\Syl_p(E),\\
E_{l'}&:=O_{l'}(E)=\{(x,y,\xi^m,\xi^n,\xi^r)\in E|m,n,r\in l^a\mathbb{N}_0\},\\
Z_{l'}&:=O_{l'}(Z)=E_{l'}\cap Z.
\end{align*}
We set $F_{i,l'}:=O_{l'}(F_i)$, for $i=1,2$, so $F_{1,l'}\times F_{2,l'}$ is the image of $E_{l'}$ under $\phi$. Note that, since $p-1$ is not a power of $l$, $\phi(P_i)\lneq F_{i,l'}$, for $i=1,2$.
\newline
\newline
Until further notice we fix $t_1,t_2\in\mathbb{N}$. Since $F$ acts on $D_t$, for any $t\in\mathbb{N}$, we have a natural action of $F_1\times F_2$ on $D_{t_1}\times D_{t_2}$.

\begin{lem}\label{lem:norm}
$F_1\times F_2$ acts faithfully on $D_{t_1}\times D_{t_2}$ and
\begin{align*}
N_{\Aut(D_{t_1}\times D_{t_2})}(F_{1,l'}\times F_{2,l'})=
\begin{cases}
((C_1\rtimes F_1)\times (C_2\rtimes F_2))\rtimes\langle s\rangle&\text{ if }t_1=t_2,\\
(C_1\rtimes F_1)\times (C_2\rtimes F_2)&\text{ otherwise},\\
\end{cases}
\end{align*}
where $s\in \Aut(D_{t_1}\times D_{t_2})$ is defined via $s(g,h)=(h,g)$, for all $(g,h)\in D_{t_1}\times D_{t_2}$ and $C_i:=C_{\Aut(D_{t_i})}(F_{i,l'})$, for $i=1,2$.
\end{lem}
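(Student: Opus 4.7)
The plan is as follows. Faithfulness is immediate: $F$ acts faithfully on $\mathbb{F}_p$ by affine transformations, hence on $\Omega_t$ by permuting coordinates; since $\Omega_t=D_t\times\Lambda_t$ is $F$-stable with $\Lambda_t$ pointwise $F$-fixed, this action remains faithful on $D_t$, and therefore $F_1\times F_2$ acts faithfully on $D_{t_1}\times D_{t_2}$. For the normalizer formula I will first reduce to a single factor and then compute $N_{\Aut(D_t)}(F_{l'})$.

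The reduction proceeds by fixed-point recognition of the factors. Because $\mathbb{F}_p\leq F_{i,l'}$ acts on the $\mathbb{F}_p$-indexed coordinates of $\Omega_{t_i}$ regularly, its fixed points in $\Omega_{t_i}$ are exactly $\Lambda_{t_i}$, and $\Lambda_{t_i}\cap D_{t_i}=1$ because $p\neq l$. Consequently $(D_{t_1}\times D_{t_2})^{F_{2,l'}}=D_{t_1}$ and symmetrically. Now $F_{l'}\cong\mathbb{F}_p\rtimes(\mathbb{F}_p^\times)_{l'}$ is a centreless Frobenius group with derived subgroup $\mathbb{F}_p$, and a direct normal-subgroup analysis shows the only normal subgroups of $F_{1,l'}\times F_{2,l'}$ isomorphic to $F_{l'}$ are $F_{1,l'}$ and $F_{2,l'}$ themselves: for any such $N$ set $A=N\cap F_{1,l'}$ and $B=N\cap F_{2,l'}$; if $A=1$ then $[N,F_{1,l'}]\leq N\cap(\mathbb{F}_p\times 1)=1$ forces $N\leq C_{F_{1,l'}\times F_{2,l'}}(F_{1,l'})=F_{2,l'}$, while if $A,B\neq 1$ then both contain the unique minimal non-trivial normal subgroup of $N$ (its ``$\mathbb{F}_p$''), contradicting $A\cap B=1$. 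Hence the automorphism $\bar\sigma$ induced on $F_{1,l'}\times F_{2,l'}$ by any $\sigma\in N_{\Aut(D_{t_1}\times D_{t_2})}(F_{1,l'}\times F_{2,l'})$ permutes $\{F_{1,l'},F_{2,l'}\}$, and via the fixed-point description $\sigma$ correspondingly permutes $\{D_{t_1},D_{t_2}\}$. A swap forces $D_{t_1}\cong D_{t_2}$, hence $t_1=t_2$, in which case composing with $s$ returns us to the factor-preserving case.

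It remains to show $N_{\Aut(D_t)}(F_{l'})=C\rtimes F$, where $C=C_{\Aut(D_t)}(F_{l'})$. The inclusion $\supseteq$ and the normality of $C$ are automatic; triviality of $C\cap F$ follows from the direct computation $C_F(F_{l'})=1$, where the standing hypothesis that $p-1$ is not a power of $l$ ensures $(\mathbb{F}_p^\times)_{l'}\neq 1$ and hence centralises no non-zero element of $\mathbb{F}_p$. For the reverse inclusion the key point is $\Aut(F_{l'})\cong F$: the Frobenius kernel $\mathbb{F}_p$ is characteristic, restriction to $\mathbb{F}_p$ realises every element of $\mathbb{F}_p^\times$ via conjugation in $F$, and its kernel consists precisely of inner automorphisms by $\mathbb{F}_p$ (since $H^1((\mathbb{F}_p^\times)_{l'},\mathbb{F}_p)=0$ by coprimality and $\mathbb{F}_p^{(\mathbb{F}_p^\times)_{l'}}=0$), giving $|\Aut(F_{l'})|=p(p-1)=|F|$; combined with the injective embedding $F\hookrightarrow\Aut(F_{l'})$ from faithfulness, this forces the embedding to be an isomorphism. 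Hence for any $\sigma\in N_{\Aut(D_t)}(F_{l'})$ there is $f\in F$ with $\bar\sigma$ equal to conjugation by $f$ on $F_{l'}$; then $\sigma f^{-1}\in C$, so $\sigma\in C\cdot F$, as required.

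I expect the main obstacle to be the case distinction in the second paragraph: ruling out ``twisted'' normal copies of $F_{l'}$ inside $F_{1,l'}\times F_{2,l'}$, so that a normalizing $\sigma$ is forced to permute $\{D_{t_1},D_{t_2}\}$ rather than mix them. This is a genuine worry when $t_1<t_2$, since $D_{t_1}\times D_{t_2}$ admits many off-diagonal abelian-group automorphisms along its $l$-adic filtration; it is precisely the Frobenius/centreless structure of $F_{l'}$ that rigidifies the situation and makes the formula come out so cleanly.
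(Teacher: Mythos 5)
Your proof is correct and follows the same overall strategy as the paper: establish faithfulness via $\Omega_t=D_t\times\Lambda_t$ with $\Lambda_t=C_{\Omega_t}(F)$, show a normalizing automorphism permutes $\{D_{t_1},D_{t_2}\}$, and compute $N_{\Aut(D_t)}(F_{l'})=C\rtimes F$ by proving $\Aut(F_{l'})\cong F$. The one place where you take a noticeably longer route is the factor-recognition step, and it is worth noting that the obstacle you flag at the end is an artefact of that route rather than a real difficulty: the paper avoids it by observing that $D_{t_1}$ and $D_{t_2}$ are precisely the nontrivial subgroups of $D_{t_1}\times D_{t_2}$ arising as $C_{D_{t_1}\times D_{t_2}}(g)$ for $g$ a nontrivial $p$-element of $F_{1,l'}\times F_{2,l'}$ (because $C_{D_t}(\mathbb{F}_p)=1$); since a normalizing $\sigma$ permutes such centralizers, it permutes $\{D_{t_1},D_{t_2}\}$ directly, with no need for your classification of normal copies of $F_{l'}$ inside $F_{1,l'}\times F_{2,l'}$. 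Your classification is correct, and your computation of $|\Aut(F_{l'})|=p(p-1)$ via $H^1\bigl((\mathbb{F}_p^\times)_{l'},\mathbb{F}_p\bigr)=0$ is an order-counting repackaging of the paper's Schur--Zassenhaus reduction; both rest on the same coprimality facts, so neither buys anything substantive over the other.
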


\begin{proof}
To show $F_1\times F_2$ acts faithfully on $D_{t_1}\times D_{t_2}$ we need only show that $F$ acts faithfully on $D_t$, for any $t\in \mathbb{N}$. However, $\Lambda_t=C_{\Omega_t}(F)$ and $\Omega_t=D_t\times \Lambda_t$ so it suffices to show that $F$ acts faithfully on $\Omega_t$. This follows since the action of $F$ on $\mathbb{F}_p$ is faithful.
\newline
\newline
Again, consider the action of $F$ on $\Omega_t$ for some $t\in\mathbb{N}$. Since $\Lambda_t=C_{\Omega_t}(\mathbb{F}_p)$, $C_{D_t}(\mathbb{F}_p)$ is trivial and so we can determine $D_{t_1}$ and $D_{t_2}$ as the unique non-trivial subgroups of $D_{t_1}\times D_{t_2}$ occurring as the centraliser of some non-trival $p$-element of $F_{1,l'}\times F_{2,l'}$. Therefore, any element of $N_{\Aut(D_{t_1}\times D_{t_2})}(F_{1,l'}\times F_{2,l'})$ must respect the decomposition $D_{t_1}\times D_{t_2}$. If $t_1=t_2$, then $s$ swaps $D_{t_1}$ and $D_{t_2}$ and if $t_1\neq t_2$, then every element of $N_{\Aut(D_{t_1}\times D_{t_2})}(F_{1,l'}\times F_{2,l'})$ must leave each $D_{t_i}$ invariant, for $i=1,2$.
\newline
\newline
To complete the claim we need to prove that $N_{\Aut(D_{t_i})}(F_{i,l'})=C_i\rtimes F_i$, for $i=1,2$. However, this follows from the claim that $\Aut(F_{l'})\cong F$, where $F_{l'}:=O_{l'}(F)\cong F_{i,l'}$. We first show that $F$ acts faithfully on $F_{l'}$ via conjugation. Suppose that $g\in C_F(F_{l'})$, then $g\in C_F(\mathbb{F}_p)=\mathbb{F}_p$. As noted before the lemma, $\mathbb{F}_p$ is a proper subgroup of $F_{l'}$ and so $g\in C_{\mathbb{F}_p}(h)=\{1\}$, where $h\in F_{l'}\backslash \mathbb{F}_p$. Therefore, we have proved that $F\leq \Aut(F_{l'})$. Now let $\zeta\in\Aut(F_{l'})$. Certainly $\Aut(\mathbb{F}_p)\cong\mathbb{F}_p^\times$ and so to prove that $\zeta$ is induced by some element of $F$ we may assume that $\zeta$ fixes $\mathbb{F}_p$ pointwise. By the Schur-Zassenhaus theorem, we may, in addition, assume that $\zeta$ leaves $(\mathbb{F}_p^\times)_{l'}:=O_{l'}(\mathbb{F}_p^\times)$ invariant. Finally, since it fixes $\mathbb{F}_p$ pointwise, it must also fix $(\mathbb{F}_p^\times)_{l'}$ pointwise.
\end{proof}

\begin{defi}
We define $\tilde{G}:=(D_{t_1}\times D_{t_2})\rtimes E$, where the action of $E$ on $D_{t_1}\times D_{t_2}$ is given via $\phi$. In addition we set $G:=(D_{t_1}\times D_{t_2})\rtimes E_{l'}\leq \tilde{G}$ and for each $\varphi\in\Irr(Z_{l'})$ we set $B_\varphi:=\cO G e_\varphi$.
\end{defi}

Note that, since $D:=D_{t_1}\times D_{t_2}\lhd G$, any block idempotent of $\cO G$ is supported on $C_G(D)=D\times Z_{l'}$. Therefore, $B_\varphi$ is a block of $\cO G$ with defect group $D$.

\section{Arbitrarily large $\cO$-Morita Frobenius numbers}\label{sec:main}

Adopting the notation of Lemma~\ref{lem:norm} we have the following.

\begin{lem}\label{lem:autos}
Let $\varphi\in\Irr(Z_{l'})$.
\begin{enumerate}
\item For each $\zeta\in (C_1\rtimes F_1)\times (C_2\rtimes F_2)$, there exists $\delta\in\Aut(G)$ such that $\delta\downarrow_D=\zeta$ and $\delta\downarrow_{Z_{l'}}=\Id_{Z_{l'}}$. In particular, $\delta$ induces an $\cO$-algebra automorphism of $B_\varphi$.
\item If $t_1=t_2$, there exists $\delta\in\Aut(G)$ such that $\delta\downarrow_D=s$ and $\delta\downarrow_{Z_{l'}}$ is given by inversion. In particular, $\delta$ induces an $\cO$-algebra homomorphism $B_\varphi\to B_{\varphi^{-1}}$.
\end{enumerate}
\end{lem}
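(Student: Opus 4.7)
The plan is to construct each $\delta$ by specifying an automorphism $\alpha$ of $D$ and an automorphism $\beta$ of $E_{l'}$ satisfying the semidirect compatibility $\alpha(\phi(e)\cdot d)=\phi(\beta(e))\cdot\alpha(d)$ for all $e\in E_{l'}$, $d\in D$, and then setting $\delta(de)=\alpha(d)\beta(e)$; this automatically yields an automorphism of $G_{l'}=D\rtimes E_{l'}$. Once $\delta$ is built, its effect on the central idempotent $e_\varphi=\frac{1}{|Z_{l'}|}\sum_{z\in Z_{l'}}\varphi(z^{-1})z$ is governed entirely by $\delta\downarrow_{Z_{l'}}$: if this is the identity then $\delta(e_\varphi)=e_\varphi$, giving an algebra automorphism of $B_\varphi$ as required in (1); if it is inversion then a one-line reindexing gives $\delta(e_\varphi)=e_{\varphi^{-1}}$, yielding the algebra homomorphism $B_\varphi\to B_{\varphi^{-1}}$ required in (2).

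For part (1), I would split $\zeta=\zeta_C\zeta_F$ with $\zeta_C\in C_1\times C_2$ and $\zeta_F\in F_1\times F_2$, and produce $\delta$ as a composite. The factor $\zeta_F$ is realised by inner conjugation: pick any lift $\tilde f\in E$ with $\phi(\tilde f)=\zeta_F$, noting that $G_{l'}\lhd D\rtimes E$ because $E_{l'}$ is characteristic in $E$; conjugation by $\tilde f$ then preserves $G_{l'}$, restricts to $\zeta_F$ on $D$ by construction of the semidirect action, and acts trivially on $Z_{l'}\leq Z(E)$. The factor $\zeta_C$ is realised with $\beta=\Id_{E_{l'}}$: by Lemma~\ref{lem:norm}, $\zeta_C$ centralises the whole image $F_{1,l'}\times F_{2,l'}$ of $\phi\downarrow_{E_{l'}}$ in $\Aut(D)$, so the compatibility condition is tautological. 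Composing the two contributions gives $\delta$ with $\delta\downarrow_D=\zeta$ and $\delta\downarrow_{Z_{l'}}=\Id$.

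For part (2), the real content is producing a swap on the $E_{l'}$-side that pairs with $s$. I would write down the explicit map
\[
\sigma(x,y,\lambda^m,\lambda^n,\mu)=(y,x,\lambda^n,\lambda^m,\mu^{-1}\lambda^{mn})
\]
on $E$, which visibly inverts $Z$ and induces the obvious swap $F_1\leftrightarrow F_2$ modulo $Z$, and check directly from the given multiplication rule that $\sigma$ is a homomorphism. The crucial calculation is that after swapping the two $F$-factors the cocycle term $\lambda^{n_1m_2}$ becomes $\lambda^{m_1n_2}$, and the quadratic correction $\mu\mapsto\mu^{-1}\lambda^{mn}$ is precisely what is needed to absorb the discrepancy via the expansion $(m_1+m_2)(n_1+n_2)-n_1m_2=m_1n_1+m_2n_2+m_1n_2$. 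Since $E_{l'}$ is characteristic in $E$, $\sigma$ restricts to an automorphism of $E_{l'}$, and the compatibility $s(\phi(e)\cdot d)=\phi(\sigma(e))\cdot s(d)$ then follows at once from $t_1=t_2$ and the fact that $s$ swaps the two $D_{t_i}$ exactly as $\sigma$ swaps the two $F_i$-actions. The sole non-formal step, and the place where I expect the main difficulty, is spotting the quadratic correction $\lambda^{mn}$ and carrying out the cocycle verification.
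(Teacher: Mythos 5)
Your proof is correct and follows essentially the same approach as the paper: part (1) is handled by splitting into the $C_1\times C_2$ and $F_1\times F_2$ contributions (the latter by conjugation by a lift), and your map $\sigma(x,y,\lambda^m,\lambda^n,\mu)=(y,x,\lambda^n,\lambda^m,\mu^{-1}\lambda^{mn})$ coincides with the paper's $\delta\downarrow_{E_{l'}}(x,y,\lambda^m,\lambda^n,\lambda^r)=(y,x,\lambda^n,\lambda^m,\lambda^{mn-r})$ upon writing $\mu=\lambda^r$. You additionally spell out the cocycle check and the semidirect compatibility, which the paper leaves implicit.
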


\begin{proof}$ $
\begin{enumerate}
\item For each $\zeta\in C_1\times C_2$, we define $\delta\in\Aut(G)$ via $\delta\downarrow_D=\zeta$ and $\delta\downarrow_{E_{l'}}=\Id_{E_{l'}}$. For each $\zeta\in F_1\times F_2$, we define $\delta\in\Aut(G)$ to be given by conjugating by some $g\in E\leq \tilde{G}$, a lift of $\zeta$.
\item We define $\delta\in\Aut(G)$ via $\delta\downarrow_D=s$ and
\begin{align*}
\delta\downarrow_{E_{l'}}(x,y,\lambda^m,\lambda^n,\lambda^r)=(y,x,\lambda^n,\lambda^m,\lambda^{mn-r}),
\end{align*}
for all $x,y\in\mathbb{F}_p$ and $m,n,r\in l^a\mathbb{N}_0$, on $E_{l'}$.
\end{enumerate}
\end{proof}

\begin{lem}\label{lem:Dkernel}
Any $\chi\in\Irr(G)$ reduces to an irreducible Brauer character if and only if $\chi\in\Irr(G,1_D)$.
\end{lem}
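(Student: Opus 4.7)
The plan is to exploit that $G_{l'}=D\rtimes E_{l'}$ with $D$ a normal $l$-subgroup and $E_{l'}$ an $l'$-group. Since $D$ is a normal $l$-subgroup, inflation from $E_{l'}\cong G_{l'}/D$ induces a bijection $\Irr(E_{l'})\cong\IBr(G_{l'})$, and by Schur--Zassenhaus every $l$-regular element of $G_{l'}$ is conjugate to an element of $E_{l'}$. Consequently $\chi^0\in\IBr(G_{l'})$ if and only if $\chi\downarrow_{E_{l'}}\in\Irr(E_{l'})$. The ``if'' direction of the lemma is then immediate: if $\chi\in\Irr(G_{l'},1_D)$ then $D\subseteq\ker\chi$, so $\chi$ is inflated from $E_{l'}$, and $\chi\downarrow_{E_{l'}}$ is that irreducible character itself.

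For the converse I would pursue a degree-divisibility obstruction on $\chi(1)$. The key structural observation is that $P=P_1\times P_2$ is a \emph{normal abelian} Sylow $p$-subgroup of $E_{l'}$: abelianness is immediate from the multiplication (lifts of the $\mathbb{F}_p$-parts commute by \eqref{algn:comm}), and normality in $E_{l'}$ is a short check using the given multiplication rule. Ito's theorem then forces $\alpha(1)\mid[E_{l'}:P]=m^3$ for every $\alpha\in\Irr(E_{l'})$, where $m:=(p-1)/l^a$ is coprime to $p$; in particular all irreducible character degrees of $E_{l'}$ are coprime to $p$.

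To finish, suppose for contradiction $\chi\in\Irr(G_{l'})\setminus\Irr(G_{l'},1_D)$ reduces to an irreducible Brauer character. By Clifford theory, $\chi=\Ind_{T_\theta}^{G_{l'}}(\psi)$ for some nontrivial $\theta=\theta_1\otimes\theta_2\in\Irr(D_{t_1}\times D_{t_2})$ and $\psi\in\Irr(T_\theta)$ lying over $\theta$, so $\chi(1)=[E_{l'}:\Stab_{E_{l'}}(\theta)]\cdot\psi(1)$. Taking WLOG $\theta_1\neq 1_{D_{t_1}}$, Lemma~\ref{lem:FpStable} forces $\theta_1$ not to be $\mathbb{F}_p$-stable; as $|\mathbb{F}_p|=p$ is prime, the $\mathbb{F}_p$-orbit of $\theta$ has size $p$, hence $p\mid[E_{l'}:\Stab_{E_{l'}}(\theta)]$ and so $p\mid\chi(1)$. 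This contradicts the conclusion of the second paragraph, since $\chi\downarrow_{E_{l'}}\in\Irr(E_{l'})$ would have degree $\chi(1)$ coprime to $p$. The main technical point requiring care is the normality of $P$ in $E_{l'}$, so that Ito's theorem applies; after that, Clifford theory together with Lemma~\ref{lem:FpStable} closes the argument.
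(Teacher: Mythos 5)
Your proposal is correct, and it genuinely diverges from the paper's argument in how the converse direction is closed. Both you and the paper begin the same way: reduce $\IBr(G_{l'})$ to $\Irr(E_{l'})$ via the normal $l$-subgroup $D$, so that the question becomes whether $\chi\downarrow_{E_{l'}}$ is irreducible. The paper then argues at the level of restrictions to the normal subgroups $P_1$ and $P_2$: if $\chi$ lies over a nontrivial $\theta\in\Irr(D)$, Lemma~\ref{lem:FpStable} forces $\chi\downarrow_{P_1}$ or $\chi\downarrow_{P_2}$ to mix trivial and nontrivial constituents, while a comparison of the $E_{l'}$-orbits on $\Irr(P)$ shows no $\xi\in\Irr(E_{l'})$ can do this, giving a direct contradiction. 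You instead run a degree-divisibility argument: you observe that $P$ is a normal abelian Sylow $p$-subgroup of $E_{l'}$ (correct, and verifiable directly from the multiplication rule), invoke Ito's theorem to conclude every degree in $\Irr(E_{l'})$ is coprime to $p$, and then use Clifford theory plus Lemma~\ref{lem:FpStable} to show that any $\chi$ lying over a nontrivial $\theta$ has $p\mid\chi(1)$, because the $P_1$-orbit of $\theta_1$ has full size $p$ and $P$ is the unique Sylow $p$-subgroup of $E_{l'}$. Both routes use Lemma~\ref{lem:FpStable} and the normality of $P$, but the paper's is the more elementary Clifford-orbit computation whereas yours trades that for a clean appeal to Ito's theorem and is arguably shorter to state once the structural facts about $P$ are in hand.
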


\begin{proof}
Since $D\lhd G$ and $G/D\cong E_{l'}$ is an $l'$-group, $D$ is contained in the kernel of every simple $kG$-module and every irreducible Brauer character is determined by its restriction to $E_{l'}$. Therefore, we have a bijection between $\IBr(G)$ and $\Irr(E_{l'})$ given by restriction to $E_{l'}$ and through this bijection we can identify the decomposition map
\begin{align*}
\mathbb{Z}\Irr(G)\to\mathbb{Z}\IBr(G)
\end{align*}
with the restriction map
\begin{align*}
\mathbb{Z}\Irr(G)\to\mathbb{Z}\Irr(E_{l'}).
\end{align*}
It therefore remains to show that for any $\chi\in\Irr(G)$, $\chi\downarrow_{E_{l'}}$ is irreducible if and only if $\chi\in\Irr(G,1_D)$.
\newline
\newline
If $\chi\in\Irr(G,1_D)$, then certainly $\chi\downarrow_{E_{l'}}$ is irreducible.
\newline
\newline
For the converse let $1_D\neq\theta\in\Irr(D)$. By Lemma~\ref{lem:FpStable}, $\Stab_P(\theta)=\{1\}$, $P_1$ or $P_2$. Therefore, any $\chi\in\Irr(G,\theta)$ must satisfy the following condition. Either $\chi\downarrow_{P_1}$ has trivial and non-trivial, irreducible constituents or $\chi\downarrow_{P_2}$ has trivial and non-trivial, irreducible constituents. However, by considering orbits of $\Irr(P)$ under the action of $E_{l'}$, we get that for any $\xi\in\Irr(E_{l'})$, $\xi\downarrow_{P_1}$ does not have both trivial and non-trivial, irreducible constituents and $\xi\downarrow_{P_2}$ does not have both trivial and non-trivial, irreducible constituents. Therefore, $\chi\downarrow_{E_{l'}}$ cannot be irreducible.
\end{proof}

\begin{prop}\label{prop:ME}
Let $\varphi,\vartheta\in\Irr(Z_{l'})$. Then $B_\varphi$ is Morita equivalent to $B_\vartheta$ if and only if $\varphi=\vartheta$ or $t_1=t_2$ and $\varphi=\vartheta^{-1}$.
\end{prop}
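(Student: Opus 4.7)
For the ``if'' direction the plan is straightforward. If $\varphi = \vartheta$ the two blocks are literally equal. If $t_1 = t_2$ and $\varphi = \vartheta^{-1}$, Lemma~\ref{lem:autos}(2) produces an $\cO$-algebra isomorphism $B_\varphi \to B_{\varphi^{-1}} = B_\vartheta$ arising from an automorphism of $G_{l'}$, which is in particular a Morita equivalence.

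For the converse, suppose $M$ is a $B_\varphi$-$B_\vartheta$-bimodule inducing a Morita equivalence, with corresponding character bijection $\sigma : \Irr(B_\varphi) \to \Irr(B_\vartheta)$. My first step is to apply Proposition~\ref{prop:indMor}(1) with $Q_1 = Q_2 = D$. Its hypothesis is satisfied because Morita equivalence over $\cO$ preserves the property of an ordinary character reducing to an irreducible Brauer character, and Lemma~\ref{lem:Dkernel} identifies this property with membership of $\Irr(B_\varphi,1_D)$, respectively $\Irr(B_\vartheta,1_D)$; hence $\sigma$ restricts to a bijection between these two sets. The proposition then delivers a Morita equivalence between $B_\varphi^D = \cO E_{l'} e_\varphi$ and $B_\vartheta^D = \cO E_{l'} e_\vartheta$ via the bimodule $M^D$.

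The crucial step is to exploit this secondary equivalence to force $\vartheta \in \{\varphi, \varphi^{-1}\}$, with the second option permitted only when $t_1 = t_2$. Because $E_{l'}$ is an $l'$-group, both $B_\varphi^D$ and $B_\vartheta^D$ are semisimple, so Morita equivalence on its own only matches the number of simples. To refine this I would view $B_\varphi^D$ as a twisted group algebra $\cO^{\alpha_\varphi}(F_{1,l'}\times F_{2,l'})$ with cocycle $\alpha_\varphi=\varphi\circ c$ coming from the commutator pairing~\eqref{algn:comm}, and argue that the Morita equivalence class of this twisted group algebra depends only on the orbit of the cohomology class $[\alpha_\varphi] \in H^2(F_{1,l'}\times F_{2,l'},\cO^\times)$ under the outer automorphism group of the underlying defect-group data. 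By Lemma~\ref{lem:norm} this outer automorphism group acts on $\Irr(Z_{l'})$ trivially on the $(C_1\rtimes F_1)\times(C_2\rtimes F_2)$-component (by Lemma~\ref{lem:autos}(1)) and, when $t_1 = t_2$, by inversion via the swap $s$ (by Lemma~\ref{lem:autos}(2)). This would force $\vartheta \in \{\varphi,\varphi^{-1}\}$, with the second option reserved for $t_1=t_2$.

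The main obstacle is making precise the claim that the Morita equivalence class of $B_\varphi^D$ depends only on the $\Out$-orbit of $[\alpha_\varphi]$. I would tackle this by upgrading the Morita equivalence to a splendid one using Proposition~\ref{prop:indMor}(2), perhaps after constructing a canonical candidate bimodule from group automorphisms, or by extracting from Lemma~\ref{lem:rank} (in the Cauchy--Schwarz equality case) enough information to pin down the cocycle class. Once this is achieved, the rest is a cohomological computation which, given the explicit form of $\alpha_\varphi$ recorded in~\eqref{algn:comm}, should be direct.
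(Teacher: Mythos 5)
Your ``if'' direction and the first reduction step of the converse (applying Lemma~\ref{lem:Dkernel} and Proposition~\ref{prop:indMor}(1) to get a Morita equivalence $B_\varphi^D\sim B_\vartheta^D$) coincide with the paper's argument. The remainder of your proposal, however, has a genuine gap that you yourself flag as ``the main obstacle'', and I do not think the route you sketch can close it. Since $E_{l'}$ is an $l'$-group, $B_\varphi^D$ and $B_\vartheta^D$ are semisimple $\cO$-algebras, so (with $K$ large enough, as the paper assumes) they are direct sums of matrix algebras over $\cO$. A Morita equivalence of such algebras remembers only the multiset of block sizes; it simply does not see the cohomology class of the cocycle $\alpha_\varphi$. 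So the claim that ``the Morita equivalence class of $B_\varphi^D$ depends only on the $\Out$-orbit of $[\alpha_\varphi]$'' is false in the relevant sense, and the proposed cohomological computation never gets off the ground. The commutator relation~\eqref{algn:comm} is the right tool, but it has to enter at a different point.

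The paper proceeds differently. After the reduction to the dominated blocks, it observes that $M^D$ has trivial source (automatic, as $G_{l'}/D$ is an $l'$-group), upgrades this via Proposition~\ref{prop:indMor}(2) to the statement that $M$ itself has trivial source, and then invokes Puig's result~\cite[7.6]{pu99}: $M$ is a direct summand of $\cO_{\Delta\gamma}\uparrow^{G_{l'}\times G_{l'}}$ for some $\gamma\in\Aut(D)$. Lemma~\ref{lem:rank} together with a count of the stabiliser $S$ of the $\cO((D\times Z_{l'})\times(D\times Z_{l'}))$-module ${}_\gamma(\cO D)\otimes_\cO{}_\varphi\cO_\vartheta$ forces equality in the rank inequalities, which in turn forces $\gamma\in N_{\Aut(D)}(F_{1,l'}\times F_{2,l'})$ and shows ${}_\gamma(\cO D)\otimes_\cO{}_\varphi\cO_\vartheta$ extends to an $\cO S$-module. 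Using Lemmas~\ref{lem:norm} and~\ref{lem:autos} one reduces to $\gamma=\Id_D$, and then the one-dimensional $k(\Delta E_{l'})$-module $kD/J(kD)$ restricts on $\Delta Z_{l'}$ both to the trivial module (since, by~\eqref{algn:comm}, $Z_{l'}\leq[E_{l'},E_{l'}]$) and to the module given by $\varphi\vartheta^{-1}$; hence $\varphi=\vartheta$. In short: you need the trivial-source and Puig machinery to recover the cocycle information that the bare Morita equivalence of semisimple algebras has thrown away, and this is the step your proposal leaves open.
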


\begin{proof}
Certainly if $\varphi=\vartheta$, then $B_\varphi$ is Morita equivalent to $B_\vartheta$. Also, if $t_1=t_2$ and $\varphi=\vartheta^{-1}$, then $B_\varphi$ and $B_\vartheta$ are isomorphic via part (2) of Lemma~\ref{lem:autos}.
\newline
\newline
Conversely, suppose $M$ is an $B_\varphi$-$B_\vartheta$-bimodule inducing a Morita equivalence between $B_\varphi$ and $B_\vartheta$ and $\sigma:\Irr(B_\varphi)\to\Irr(B_\vartheta)$ the corresponding bijection of characters. Then, by Lemma~\ref{lem:Dkernel}, $\sigma$ restricts to a bijection $\Irr(B_\varphi,1_D)\to\Irr(B_\vartheta,1_D)$ and by part (1) of Proposition~\ref{prop:indMor}, ${}^DM=M^D$ induces a Morita equivalence between $B_\varphi^D$ and $B_\vartheta^D$. However, $G/D$ is an $l'$-group and so ${}^DM=M^D$ certainly has trivial source. Therefore, by part (2) of Proposition~\ref{prop:indMor}, $M$ must also have trivial source.
\newline
\newline
It now follows from~\cite[7.6]{pu99} that $M$ is a direct summand of $\cO_{\Delta\gamma}\uparrow^{G\times G}$, for some $\gamma\in\Aut(D)$, where $\Delta\gamma:=\{(d,\gamma(d))|d\in D\}$ and $\cO_{\Delta\gamma}$ denotes the trivial $\cO(\Delta\gamma)$-module. Therefore, $M$ is a direct summand of 
\begin{align*}
e_\varphi(\cO_{\Delta\gamma}\uparrow^{(D\times Z_{l'})\times (D\times Z_{l'})}\uparrow^{G\times G})e_\varphi\cong& (e_\varphi(\cO_{\Delta\gamma}\uparrow^{(D\times Z_{l'})\times (D\times Z_{l'})})e_\varphi)\uparrow^{G\times G}\\
\cong&({}_\gamma(\cO D)\otimes_\cO {}_\varphi\cO_\vartheta)\uparrow^{G\times G},
\end{align*}
where ${}_\gamma(\cO D)$ denotes the $\cO D$-$\cO D$-bimodule $\cO D$, with the canonical right action of $\cO D$ and the left action of $\cO D$ given via $\gamma$ and ${}_\varphi\cO_\vartheta$ denotes the $\cO Z_{l'}e_\varphi$-$\cO Z_{l'}e_\vartheta$-bimodule $\cO$, with the canonical left $\cO Z_{l'}e_\varphi$ and right $\cO Z_{l'}e_\vartheta$ actions.
\newline
\newline
We now analyse $S:=\Stab_{G\times G}({}_\gamma(\cO D)\otimes_\cO {}_\varphi\cO_\vartheta)$. First note that $M\cong N\uparrow^{G\times G}$, for some indecomposable $\cO S$-module $N$ such that ${}_\gamma(\cO D)\otimes_\cO {}_\varphi\cO_\vartheta$ is a direct summand of $N\downarrow_{(D\times Z_{l'})\times (D\times Z_{l'})}$. Therefore,
\begin{align}\label{algn:ineq}
\begin{split}
&|D|=\rk_{\cO}({}_\gamma(\cO D)\otimes_\cO {}_\varphi\cO_\vartheta)\leq\rk_{\cO}(N),\\
&\rk_{\cO}(N)[G\times G:S]=\rk_{\cO}(M)\leq\rk_{\cO}(B_\varphi)=[G:Z_{l'}],
\end{split}
\end{align}
where the inequality on the second line follows from Lemma~\ref{lem:rank}. In particular, $[G\times G:S]\leq [G:D\times Z_{l'}]$, which rearranges to
\begin{align}\label{algn:bdd_below}
[S:(D\times Z_{l'})\times(D\times Z_{l'})]\geq [G:D\times Z_{l'}]
\end{align}
with equality if and only if we have equality throughout (\ref{algn:ineq}). Next let $(g,h)\in S$. Since ${}_\gamma(\cO D)$ is $S$-stable and $\Delta\gamma$ is the unique vertex of ${}_\gamma(\cO D)$, we have
\begin{align*}
(g,h)(\Delta\gamma)(g,h)^{-1}=\Delta\gamma.
\end{align*}
In other words,
\begin{align}\label{algn:normalise}
gdg^{-1}=\gamma^{-1}(h\gamma(d)h^{-1}),
\end{align}
for all $d\in D$. Now, by Lemma~\ref{lem:norm}, $F_{1,l'}\times F_{2,l'}\cong G/(D\times Z_{l'})$ acts faithfully on $D$ and so $g(D\times Z_{l'})$ determines $h(D\times Z_{l'})$. In particular,
\begin{align*}
[S:(D\times Z_{l'})\times(D\times Z_{l'})]\leq [G:D\times Z_{l'}].
\end{align*}
Together with (\ref{algn:bdd_below}) and the subsequent sentence, this tells us that we have equality throughout (\ref{algn:ineq}) and (\ref{algn:bdd_below}). Equality in (\ref{algn:bdd_below}) and the sentence following (\ref{algn:normalise}) give that there exists $\zeta\in\Aut(G/(D\times Z_{l'}))$ such that
\begin{align*}
S=\{(g,h)\in G\times G|\zeta(g(D\times Z_{l'}))=h(D\times Z_{l'})\}.
\end{align*}
Furthermore, equality throughout (\ref{algn:ineq}) tells us that ${}_\gamma(\cO D)\otimes_\cO {}_\varphi\cO_\vartheta$ extends to an $\cO S$-module. Now (\ref{algn:normalise}) implies that $\gamma\in N_{\Aut(D)}(F_{1,l'}\times F_{2,l'})$ and $\zeta$ is the corresponding automorphism of $F_{1,l'}\times F_{2,l'}$, once we have identified $G/(D\times Z_{l'})$ with $F_{1,l'}\times F_{2,l'}\leq\Aut(D)$.
\newline
\newline
By Lemmas~\ref{lem:norm} and~\ref{lem:autos}, we may assume that $\gamma=\Id_D$. In particular, $\zeta=\Id_{F_{1,l'}\times F_{2,l'}}$ and so $\cO D\otimes_\cO {}_\varphi\cO_\vartheta$ extends to a module for
\begin{align*}
\cO(((D\times Z_{l'})\times (D\times Z_{l'})).(\Delta E_{l'})).
\end{align*}
So $kD\otimes_k{}_\varphi k_\vartheta$ extends to a module for
\begin{align*}
k(((D\times Z_{l'})\times (D\times Z_{l'})).(\Delta E_{l'})),
\end{align*}
where
\begin{align*}
kD\otimes_k{}_\varphi k_\vartheta:=k\otimes_{\cO}(\cO D\otimes_\cO {}_\varphi\cO_\vartheta),
\end{align*}
which we identify with $kD$ in the obvious way. As a $k((D\times Z_{l'})\times(D\times Z_{l'}))$-module, the radical of $kD$ is $J(kD)$, the Jacobson radical of $kD$ as a ring. We now study the $1$-dimensional $k(\Delta E_{l'})$-module $kD/J(kD)$.
\newline
\newline
Viewing $\lambda\in Z$, (\ref{algn:comm}) gives that $\lambda^{l^{-2a}}\in[E_{l'},E_{l'}]$ but $\lambda^{l^{-2a}}$ generates $Z_{l'}$ so $Z_{l'}\leq[E_{l'},E_{l'}]$ and $(kD/J(kD))\downarrow_{\Delta Z_{l'}}$ must be the trivial $k(\Delta Z_{l'})$-module. However, $(kD/J(kD))\downarrow_{\Delta Z_{l'}}$ is also the $1$-dimensional $k(\Delta Z_{l'})$-module corresponding to $\Delta Z_{l'}\to k^\times$, $(z,z)\mapsto\overline{\varphi.\vartheta^{-1}(z)}$. Since $Z_{l'}$ is an $l'$-group, this implies $\varphi=\vartheta$, as required.
\end{proof}

\begin{thm}\label{thm:main}
For every prime $l$ and $n\in\mathbb{N}$, there exists an $\cO$-block $b$ with $\omf(b)=n$.
\end{thm}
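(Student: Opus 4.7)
The plan is to exhibit, for each prime $l$ and each $n \in \mathbb{N}$, a block of the form $B_\varphi$ constructed above whose $\cO$-Morita Frobenius number is exactly $n$. All of the hard work already sits inside Proposition~\ref{prop:ME}; what remains is to choose the parameters $p$, $t_1 \neq t_2$ and $\varphi$ suitably, and to identify the Frobenius twist $B_\varphi^{(l^m)}$ with $B_{\varphi^{l^m}}$.

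First, I would pick a positive integer $d$, coprime to $l$, such that the smallest positive $m$ with $d \mid l^m - 1$ is exactly $n$. For $n = 1$ one can take $d = 1$, and for $n \geq 2$ one can simply take $d = l^n - 1$, since then the multiplicative order of $l$ modulo $d$ is $n$. Next, by Dirichlet's theorem on primes in arithmetic progressions, I would choose a prime $p \neq l$ with $d \mid p - 1$ and with $p - 1$ not a power of $l$; the second condition is automatic as soon as $d > 1$ and is easily arranged separately when $d = 1$. Applying the construction of this section to this $p$, with any distinct $t_1, t_2 \in \mathbb{N}$ and a linear character $\varphi \in \Irr(Z_{l'})$ of order exactly $d$ (which exists because $|Z_{l'}|$ is the $l'$-part of $p - 1$ and is divisible by $d$), I would obtain a candidate block $B_\varphi$.

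The main technical point is to verify that $B_\varphi^{(l^m)} = B_{\varphi^{l^m}}$ for every $m \in \mathbb{N}$. Writing $\overline{e_\varphi} = |Z_{l'}|^{-1} \sum_{z \in Z_{l'}} \overline{\varphi(z^{-1})}\, z$ in $k G_{l'}$ and applying the ring automorphism~(\ref{algn:ring_auto}), one uses that $|Z_{l'}|$ reduces to an element of the prime subfield $\mathbb{F}_l^\times$, which is fixed by the $l^m$-th power map, together with the identity $\overline{\varphi(z^{-1})}^{l^m} = \overline{\varphi^{l^m}(z^{-1})}$, valid because $\varphi$ takes values in roots of unity of order coprime to $l$. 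This yields $\overline{e_\varphi}^{(l^m)} = \overline{e_{\varphi^{l^m}}}$, and the uniqueness clause in the definition of $b^{(l^m)}$ then gives $B_\varphi^{(l^m)} = B_{\varphi^{l^m}}$.

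Finally, since $t_1 \neq t_2$, Proposition~\ref{prop:ME} shows that $B_\varphi$ is Morita equivalent to $B_{\varphi^{l^m}}$ if and only if $\varphi = \varphi^{l^m}$, i.e.\ if and only if $d \mid l^m - 1$. By the choice of $d$ the least such positive integer is $n$, giving $\omf(B_\varphi) = n$. The main obstacle is conceptual rather than computational: one needs to carefully track the effect of the Frobenius twist at the level of the central block idempotent $e_\varphi$, after which the theorem reduces to Proposition~\ref{prop:ME} plus a mild application of Dirichlet's theorem.
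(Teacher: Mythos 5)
Your proposal is correct and follows essentially the same route as the paper: choose $p$ via Dirichlet so that $l^n-1 \mid p-1$, take $t_1\neq t_2$, pick a character of $Z_{l'}$ of order $l^n-1$ (the paper reaches it as a power of a faithful $\varphi$, you pick it directly), verify $\overline{e_\varphi}^{(l^m)}=\overline{e_{\varphi^{l^m}}}$, and read off $\omf$ from Proposition~\ref{prop:ME}. The only (cosmetic) difference is your explicit handling of the $n=1$ case via $d=1$, which the paper glosses over, and your spelled-out computation of the Frobenius twist on the idempotent, which the paper leaves as a quick verification.
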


\begin{proof}
Let $p$ be a prime different from $l$ such that $p\equiv 1 \mod (l^n-1)$, the existence of which is guaranteed by the Dirichlet prime number theorem. In particular, $p-1$ is not a power of $l$ and so we can adopt all the notation from this and the previous section.
\newline
\newline
Let $t_1\neq t_2\in\mathbb{N}$ and $\varphi$ a faithful character of $Z_{l'}$, in particular, $\varphi$ has order $(p-1)/l^a$ which is divisible by $l^n-1$. Now set $\vartheta:=\varphi^{(p-1)/(l^a(l^n-1))}$ so $\vartheta$ has order $l^n-1$. One can quickly verify that $\overline{e_\vartheta}^{(l^m)}=\overline{e_{\vartheta^{l^m}}}$ and hence that $B_\vartheta^{(l^m)}=B_{\vartheta^{l^m}}$, for all $m\in\mathbb{N}$. Proposition~\ref{prop:ME} now implies that $\omf(B_\vartheta)$ is the smallest $m\in\mathbb{N}$ such that $\vartheta^{l^m}=\vartheta$. Therefore, $\omf(B_\vartheta)=n$.
\end{proof}

\begin{ack*}
The author would like to express gratitude to Prof. Burkhard K\"ulshammer for hosting the author at Friedrich-Schiller-Universit\"at Jena, where much of this article was written.
\end{ack*}

\end{document}